\newcommand\cF{{\mathcal F}}
\newcommand\cG{{\mathcal G}}
\newcommand\cS{{\mathcal S}}
\theoremstyle{plain}
\newtheorem{theorem}{Theorem}[section]
\newtheorem{lemma}[theorem]{Lemma}
\theoremstyle{definition}
\newcommand\cref[1]{Corollary~\ref{cor:#1}}
\title{On the general position problem on Kneser graphs}
\author{Bal\'azs Patk\'os
\medskip \\
\small Alfr\'ed R\'enyi Institute of Mathematics, Hungarian Academy of Sciences\\
\small P.O.B. 127, Budapest H-1364, Hungary. \\
\medskip
\small \texttt{
patkos@renyi.hu}}
\begin{document}

\maketitle

\begin{abstract}
    In a graph $G$, a \textit{geodesic} between two vertices $x$ and $y$ is a shortest path connecting $x$ to $y$. A subset $S$ of the vertices of $G$ is \textit{in general position} if no vertex of $S$ lies on any geodesic between two other vertices of $S$. The size of a largest set of vertices in general position is the \textit{general position number} that we denote by $gp(G)$. Recently, Ghorbani et al, proved that for any $k$ if $n\ge k^3-k^2+2k-2$, then $gp(Kn_{n,k})=\binom{n-1}{k-1}$, where $Kn_{n,k}$ denotes the Kneser graph. We improve on their result and show that the same conclusion holds for $n\ge 2.5k-0.5$ and this bound is best possible. Our main tools are a result on cross-intersecting families and a slight generalization of Bollob\'as's inequality on intersecting set pair systems.
\end{abstract}

\section{Introduction}
A recently studied extremal problem \cite{india,ize,MK} in graph theory is the following: in a graph $G$, a \textit{geodesic} between two vertices $x$ and $y$ is a shortest path connecting $x$ to $y$. We say that a subset $S$ of the vertices of $G$ is  \textit{in general position} if no vertex of $S$ lies on any geodesic between two other vertices of $S$. The size of a largest set of vertices in general position is the \textit{general position number} which we denote by $gp(G)$. Our graph of interest in this paper is the \textit{Kneser graph} $Kn_{n,k}$ whose vertex is $\binom{[n]}{k}$, the set of all $k$-element subsets of the set $[n]=\{1,2,\dots,n\}$ and two $k$-subsets $S$ and $T$ are joined by an edge if and only if $S\cap T=\emptyset$.
Ghorbani et al \cite{szloven} determined $gp(Kn_{n,2})$ and $gp(Kn_{n,3})$ for all $n$ and showed that for any fixed $k$ if $n$ is large enough, then $gp(Kn_{n,k})=\binom{n-1}{k-1}$ holds.

\begin{theorem}[\cite{szloven}]\label{szlov}
Let $n, k\ge 2$ be integers with $n\ge 3k-1$.  If for all $t$,  where $2\le t\le k$,  the inequality $k^t\binom{n-t}{k-t}+t\le \binom{n-1}{k-1}$ holds, then $gp(Kn_{n,k}) =\binom{n-1}{k-1}$.
\end{theorem}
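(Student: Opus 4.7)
Since $n\ge 3k-1$, I would first record that $Kn_{n,k}$ has diameter $2$: disjoint $k$-sets form an edge, while any two $k$-sets $A,B$ with $A\cap B\ne\emptyset$ satisfy $|A\cup B|\le 2k-1\le n-k$, so the common neighbours of $A$ and $B$ are exactly the $k$-subsets of $[n]\setminus(A\cup B)$, and these are precisely the internal vertices of all length-$2$ geodesics between $A$ and $B$. The general-position condition on a family $S\subseteq\binom{[n]}{k}$ therefore becomes: no $C\in S$ is disjoint from two other members $A,B\in S$ with $A\cap B\ne\emptyset$.

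The key step I would prove is a structural lemma for disjoint pairs: if $A,C\in S$ satisfy $A\cap C=\emptyset$, then for every $D\in S\setminus\{A,C\}$ one has $A\cap D\ne\emptyset$ if and only if $C\cap D\ne\emptyset$. Indeed, if $A\cap D\ne\emptyset$ and $C\cap D=\emptyset$, then $C$ is disjoint from both $A$ and $D$, which intersect, violating the reformulation above; the converse is symmetric in $A,C$. Taking a maximum matching $\{A_1,\dots,A_t\}$ of $S$ with $t=\nu(S)$, and iterating the lemma on the pairs $(A_i,A_j)$, any $D\in S\setminus\{A_1,\dots,A_t\}$ either meets every $A_i$ or is disjoint from every $A_i$. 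The second alternative would extend the matching to size $t+1$, contradicting maximality, so every such $D$ meets each $A_i$.

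The argument now splits on $t$. If $t=1$ then $S$ is intersecting and Erd\H{o}s--Ko--Rado (applicable since $n\ge 2k$) gives $|S|\le\binom{n-1}{k-1}$. For $2\le t\le k$, I would apply the standard count that the number of $k$-subsets of $[n]$ meeting each of $t$ pairwise disjoint $k$-sets is at most $k^t\binom{n-t}{k-t}$ (pick one element from each $A_i$, overcounting, then extend freely to a $k$-set), yielding $|S|\le t+k^t\binom{n-t}{k-t}\le\binom{n-1}{k-1}$ by hypothesis. For $t>k$ no $k$-set can meet $t$ pairwise disjoint $k$-sets, so $S=\{A_1,\dots,A_t\}$ and $|S|=t\le\lfloor n/k\rfloor$, which is well below $\binom{n-1}{k-1}$. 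The matching lower bound is realised by any star $\{F\in\binom{[n]}{k}\colon x\in F\}$, which is intersecting and hence trivially in general position. The delicate point of the plan is establishing the structural lemma for disjoint pairs; once it is in hand, the counting step lines up exactly with the numerical hypothesis on $k^t\binom{n-t}{k-t}+t$.
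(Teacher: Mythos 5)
Your proposal is a correct and complete proof. Note that the paper itself does not prove this statement: it is quoted from Ghorbani et al.\ \cite{szloven}, and the paper only remarks that the original proof relies on the general characterization of general position sets in Theorem~\ref{klavko} (components of $G[S]$ are cliques, with coherent pairwise distances and no degenerate triangle equalities). Your diameter-$2$ reformulation --- valid precisely because $n\ge 3k-1$ forces $\abs{[n]\setminus(A\cup B)}\ge k$ whenever $A\cap B\ne\emptyset$ --- together with your ``structural lemma'' for disjoint pairs re-derives exactly the instance of that characterization needed here, so the argument becomes self-contained rather than resting on the cited machinery. The remaining steps are sound: the lemma iterated over a maximum matching $A_1,\dots,A_t$ of pairwise disjoint members forces every other $D\in S$ to meet all $A_i$ (the all-or-nothing dichotomy plus maximality), the case $t=1$ is Erd\H{o}s--Ko--Rado, the case $2\le t\le k$ is the count $k^t\binom{n-t}{k-t}+t$ matching the hypothesis verbatim, the case $t>k$ is vacuous for non-matching members, and the star realizes the lower bound since an independent set in a diameter-$2$ graph is automatically in general position. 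The exact form of the numerical hypothesis strongly suggests you have reconstructed essentially the argument of \cite{szloven}; what your write-up buys is transparency about where $n\ge 3k-1$ enters (it is exactly the diameter-$2$ threshold), which is also the structural fact the present paper later exploits (with diameter $\le 3$) for its improved bound.
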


For fixed $k$ and $t=2$ the above inequality is satisfied when $n\ge k^3-k^2+2k-1$ holds. We improve on this and the main result of this note is the following.

\begin{theorem}\label{kneser}
If $n, k\ge 4$ are integers with $n\ge 2k+1$, then $gp(Kn_{n,k}) \le\binom{n-1}{k-1}$ holds. Moreover, if $n\ge 2.5k-0.5$, then we have $gp(Kn_{n,k}) =\binom{n-1}{k-1}$, while if $2k+1\le n <2.5k-0.5$, then $gp(Kn_{n,k}) <\binom{n-1}{k-1}$ holds.
\end{theorem}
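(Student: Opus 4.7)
My plan treats in sequence the lower bound, the upper bound, and the strict inequality claims of Theorem~\ref{kneser}. For the lower bound $gp(Kn_{n,k})\ge\binom{n-1}{k-1}$ when $n\ge 2.5k-0.5$, the natural candidate is the star $\cF^*=\{F\in\binom{[n]}{k}:1\in F\}$. Since $n-2k\ge(k-1)/2$ under this hypothesis, the Kneser graph $Kn_{n,k}$ has diameter $\lceil(k-1)/(n-2k)\rceil+1\le 3$. Any two distinct members of $\cF^*$ share the element $1$ and so are at distance at least $2$; hence for any three distinct $F,F_1,F_2\in\cF^*$, $d(F_1,F)+d(F,F_2)\ge 4>3\ge d(F_1,F_2)$, so $F$ cannot lie on an $F_1F_2$-geodesic and $\cF^*$ is in general position.

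For the upper bound $gp(Kn_{n,k})\le\binom{n-1}{k-1}$ (valid when $n\ge 2k+1$), let $\cF$ be a gp-set. If $\cF$ is intersecting, the bound follows from Erd\H{o}s--Ko--Rado. Otherwise fix a disjoint pair $F_1,F_2\in\cF$ and prove the key structural lemma: for any $F\in\cF\setminus\{F_1,F_2\}$ with $F\cap F_1=\emptyset$, one must have $|F\cap F_2|<3k-n$, since $|F\cap F_2|\ge 3k-n$ would yield $d(F,F_2)=2=d(F,F_1)+d(F_1,F_2)$, placing $F_1$ on an $FF_2$-geodesic. Analogous constraints from longer geodesics, derived via the distance formula in $Kn_{n,k}$, rule out further configurations. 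In the diameter-$2$ regime $n\ge 3k-1$, these constraints force the ``disjointness graph'' on $\cF$ (with edges between disjoint pairs) to be $P_3$-free, so $\cF$ decomposes as a disjoint union of Kneser-cliques $\cC_1,\dots,\cC_m$ --- families of pairwise disjoint $k$-sets --- with sets from different $\cC_i$'s pairwise intersecting. I then apply a slight generalization of Bollob\'as's set-pair inequality to the pairs $(G_{i,1},\bigcup_{j\ge 2}G_{i,j})$ across non-singleton cliques $\cC_i=\{G_{i,1},\dots,G_{i,c_i}\}$: disjointness inside $\cC_i$ gives $A_i\cap B_i=\emptyset$, and cross-intersection between cliques gives $A_i\cap B_j\ne\emptyset$ for $i\ne j$. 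Combining this with a cross-intersecting estimate between the cliques and the intersecting part of $\cF$ yields $|\cF|\le\binom{n-1}{k-1}$. In the lower-diameter range $2k+1\le n<3k-1$ the decomposition is finer, but the same ingredients apply.

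For the strict inequality when $2k+1\le n<2.5k-0.5$, the interval $(n-2k,3k-n)$ contains an integer, so the diameter of $Kn_{n,k}$ is at least $4$. I claim $\cF^*$ is not in general position: pick $F_1,F_2\in\cF^*$ with $|F_1\cap F_2|=s$ in this interval, so $d(F_1,F_2)\ge 4$; now construct $F\in\cF^*$ with $|F\cap F_i|\ge 3k-n$ for $i=1,2$ by taking $F\supseteq F_1\cap F_2$ plus $3k-n-s$ further elements each from $F_1\setminus F_2$ and $F_2\setminus F_1$, completing $F$ arbitrarily; then $d(F,F_i)=2$ and $F$ lies on a length-$4$ $F_1F_2$-geodesic. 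By EKR uniqueness (valid for $n>2k$) no intersecting gp-set achieves $\binom{n-1}{k-1}$; non-intersecting gp-sets are handled by the upper-bound argument, whose Bollob\'as step yields strict inequality whenever a non-singleton clique is present. The main obstacle throughout is the upper bound in the low-diameter range $2k+1\le n<3k-1$: the disjointness graph need not be $P_3$-free there, so the clean clique decomposition must be replaced by a more delicate structural analysis based on the explicit distance formula in $Kn_{n,k}$, combined with the generalized Bollob\'as and cross-intersecting bounds.
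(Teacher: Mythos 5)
Your overall strategy coincides with the paper's: decompose a general position set into pairwise disjoint ``Kneser-cliques'' that are mutually cross-intersecting, use Erd\H{o}s--Ko--Rado for the intersecting case, a Bollob\'as-type set-pair inequality to control the non-singleton cliques, the star for the lower bound when the diameter is at most $3$, and the failure of the star to be in general position for the strict inequality. Your lower-bound argument and your argument that the star is not in general position for $2k+1\le n<2.5k-0.5$ are essentially sound; for the latter you should additionally require $s\ge 5k-2n$ (so that a $k$-set containing $F_1\cap F_2$ together with $3k-n-s$ further elements from each of $F_1\setminus F_2$ and $F_2\setminus F_1$ actually fits inside $k$ elements), and check that such an $s$ with $n-2k<s<3k-n$ exists precisely when $2k+1\le n\le 2.5k-1$.

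The genuine gap is the upper bound for small $n$, which is where essentially all of the paper's work lies and which you explicitly defer as ``a more delicate structural analysis.'' Concretely: (i) your combination step is never carried out --- the paper bounds the number $h$ of cliques via the Hilton--Milner theorem and the number $m_2$ of non-singleton cliques via Bollob\'as applied to the $2m_2$ \emph{symmetric} pairs $(F_j,G_j),(G_j,F_j)$, giving $m_2\le\binom{2k-1}{k-1}$; your pairs $\bigl(G_{i,1},\bigcup_{j\ge 2}G_{i,j}\bigr)$ give only $m_2\le\binom{2k}{k}$ even when all cliques have size $2$, and something weaker otherwise. Even with the sharper constant, the resulting inequality $\binom{n-k-1}{k-1}>\frac{n-k}{k}\binom{2k-1}{k-1}$ closes only for $n\ge 3k+2$ (and $n\ge 17$ when $k=4$). (ii) For $2k+2\le n<3k$ the paper abandons this scheme entirely and invokes the theorem of Gerbner et al.\ on $(\le 1)$-almost intersecting families, whose uniqueness part is also what yields strictness there; your assertion that ``the Bollob\'as step yields strict inequality whenever a non-singleton clique is present'' is unsupported --- the paper instead extracts strictness from the uniqueness parts of Hilton--Milner and of the almost-intersecting theorem, plus a generalized Bollob\'as lemma for triples to handle $3k\le n<3k+2$ and $(n,k)=(16,4)$. (iii) The case $n=2k+1$ is covered by none of your tools; the paper treats it separately using the distance formula $d(H,H')=\min\{2(k-|H\cap H'|),\,2|H\cap H'|+1\}$, a parity split on $k$, and Fisher's inequality. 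As written, your plan proves the theorem only for $n$ roughly at least $3k+2$.
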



The proof of Theorem \ref{szlov} uses the following general result of Anand et al \cite{klavetal} that characterizes vertex subsets in general position.

\begin{theorem}[\cite{klavetal}]\label{klavko}
If $G$ is a connected graph, then a subset $S$ of the vertices of $G$ is in general position if and only if all the components $S_1, S_2, \dots, S_h$ of $G[S]$ are cliques in $G$ and
\begin{itemize}
    \item 
    for any $1\le i<j\le h$ and $s_i,s'_i\in S_i$, $s_j,s'_j \in S_j$ we have $d(s_i,s_j)=d(s'_i,s'_j)=:d(S_i,S_j)$ (where $d(x,y)$ denotes the distance of $x$ and $y$ in $G$),
    \item
    $d(S_i,S_j)\neq d(S_i,S_l)+d(S_l,S_j)$ for any $1\le i,j,l\le h$.
\end{itemize}
\end{theorem}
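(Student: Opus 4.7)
The plan is to decompose any general position set $S$ via Theorem~\ref{klavko}, bound the two resulting cases (intersecting families versus families containing a non-singleton matching component) using the Erd\H{o}s--Ko--Rado theorem together with a cross-intersecting estimate and a Bollob\'as-type pair inequality, and then pin down the threshold via the Kneser graph distance formula.

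By Theorem~\ref{klavko}, the components $S_1,\ldots,S_h$ of $Kn_{n,k}[S]$ are cliques of $Kn_{n,k}$, so each $S_i$ is a matching of pairwise disjoint $k$-sets, and members of distinct components correspond to intersecting $k$-sets. If every $|S_i|=1$, then $S$ is intersecting and the Erd\H{o}s--Ko--Rado theorem gives $|S|\le\binom{n-1}{k-1}$, with equality (for $n\ge 2k+1$) only when $S$ is a star $\{F:v\in F\}$. The substantial case is when some $|S_i|\ge 2$; here I would aim for the \emph{strict} inequality $|S|<\binom{n-1}{k-1}$.

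To handle the non-singleton case, fix such a component $S_1=\{A_1,\ldots,A_r\}$ with $r\ge 2$ and observe that every element of $S\setminus S_1$ must meet each $A_i$. Partition $S\setminus S_1$ into its singleton components $\cT$ (an intersecting family whose members additionally meet each $A_i$) and its remaining non-singleton components, whose union I denote $\cM$. I would bound $|\cT|$ via a Hilton--Milner-style cross-intersecting estimate of the form $|\cT|\le\binom{n-1}{k-1}-\binom{n-k-1}{k-1}$ (the extremal configuration being a star centered in $A_1$ whose members are further required to meet $A_2$), and control $|\cM|$ using the promised generalization of Bollob\'as's set-pair inequality applied to the system $(F,B_F)_{F\in S}$ with $B_F:=\bigcup\{F'\in\mathrm{comp}(F):F'\ne F\}$ for non-singleton $F$ and $B_F:=[n]\setminus F$ for singleton $F$. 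This system satisfies $F\cap B_F=\emptyset$ and $F\cap B_{F'}\ne\emptyset$ for every $F\ne F'$ (intra-component because $F\subseteq B_{F'}$ whenever $F,F'$ both lie in a common component of size $\ge 2$; inter-component by cross-intersection of distinct components), so the Bollob\'as weight sum $\sum_F\binom{|F|+|B_F|}{|F|}^{-1}\le 1$ applies. Combining the resulting estimates on $\cT$, $\cM$, and the trivial $|S_1|=r$ is intended to yield $|S|<\binom{n-1}{k-1}$ for all $n\ge 2k+1$ and $k\ge 4$.

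For the threshold, I would verify that the star $S^*=\{F:1\in F\}$ is in general position exactly when $n\ge 2.5k-0.5$ using the Valencia-Pabon--Vera distance formula
\[
d(A,B)=\min\left\{2\left\lceil \tfrac{k-|A\cap B|}{n-2k}\right\rceil,\; 2\left\lceil \tfrac{|A\cap B|}{n-2k}\right\rceil+1\right\}:
\]
above the threshold, only the distances $2$ and $3$ arise among star members and these cannot sum to another legal star distance, while below it one can exhibit a star triple with $d(A,B)=4=d(A,E)+d(E,B)=2+2$, which spoils general position. Since the star is the unique EKR-extremal intersecting family for $n\ge 2k+1$, combining this with the strict inequality from the non-singleton case delivers both $gp(Kn_{n,k})=\binom{n-1}{k-1}$ for $n\ge 2.5k-0.5$ and $gp(Kn_{n,k})<\binom{n-1}{k-1}$ for $2k+1\le n<2.5k-0.5$. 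The main obstacle is the strict inequality in the non-singleton case across all component-size patterns: the Bollob\'as weights alone are too weak to beat EKR on the singletons, so the proof must genuinely couple the pair bound on $\cM$ with the cross-intersecting bound on $\cT$, and verifying that the pair-system hypothesis survives even when components of size $\ge 3$ occur is precisely where the ``slight generalization'' of Bollob\'as from the abstract enters.
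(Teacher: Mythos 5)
Your proposal does not prove the statement it was supposed to prove. The target is Theorem~\ref{klavko}, the characterization of general position sets in an arbitrary connected graph, and your very first substantive step is ``By Theorem~\ref{klavko}, the components $S_1,\ldots,S_h$ of $Kn_{n,k}[S]$ are cliques\dots'' --- that is, you invoke the statement as a black box and then sketch a proof of the paper's \emph{other} results (essentially Theorems~\ref{main} and~\ref{kneser}: the EKR/Hilton--Milner/Bollob\'as decomposition for the upper bound, and the distance-formula analysis of stars for the $n\ge 2.5k-0.5$ threshold). As an argument for Theorem~\ref{klavko} this is circular, and the material you do develop, whatever its merits as an outline of the Kneser-graph theorem, is simply aimed at a different claim. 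For what it is worth, the paper itself offers no proof of Theorem~\ref{klavko} either; it is cited from Anand et al.~\cite{klavetal}, so there is no in-paper proof to compare your sketch against --- but the task was to prove the characterization, and nothing in your write-up addresses it.

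A genuine proof would have to argue both directions for a general connected $G$. For the forward direction: if some component of $G[S]$ is not a clique, it contains an induced path $x\sim y\sim z$ with $x\not\sim z$, whence $d(x,z)=2$ and $y$ lies on a geodesic between $x$ and $z$, contradicting general position; if $s_i\sim s_i'$ in $S_i$ and $d(s_i,s_j)<d(s_i',s_j)$ for some $s_j\in S_j$, then $d(s_i',s_j)=d(s_i,s_j)+1$ and $s_i$ is an interior vertex of a geodesic from $s_i'$ to $s_j$, which forces the distance constancy in the first bullet; and if $d(S_i,S_j)=d(S_i,S_l)+d(S_l,S_j)$, concatenating a geodesic from $S_i$ to $s_l\in S_l$ with one from $s_l$ to $S_j$ produces a geodesic through $s_l$, giving the second bullet. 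For the converse: suppose the conditions hold and some $s\in S_l$ lies strictly inside a geodesic between $x\in S_i$ and $y\in S_j$, so $d(x,y)=d(x,s)+d(s,y)$ with both summands positive; if $i=j$ then $d(x,y)\le 1$ since $S_i$ is a clique, impossible; if $l\in\{i,j\}$, say $l=i$, then $d(x,s)=1$ and $d(x,y)=1+d(s,y)$ contradicts $d(x,y)=d(s,y)=d(S_i,S_j)$ from the first bullet; and if $i,j,l$ are pairwise distinct, the equation reads $d(S_i,S_j)=d(S_i,S_l)+d(S_l,S_j)$, contradicting the second bullet. None of these steps appears in your proposal, so the gap is not a flawed lemma but the absence of the entire argument.
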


In the Kneser graph a clique corresponds to a family $\cF\subseteq \binom{[n]}{k}$ of pairwise disjoint sets and as there is no edge between different components, it follows that if $\cF_1,\cF_2,\dots, \cF_h$ correspond to the components of $G[S]$, then for any $F_i\in \cF_i$ and $F_j\in \cF_j$ with $i\neq j$ we have $F_i\cap F_j\neq \emptyset$. Families with this property are called \textit{cross-intersecting}.
 So the upper bound in Theorem \ref{kneser} will follow from the next result unless $n=2k+1$ in which case we will need some further reasonings.
 
\begin{theorem}\label{main}
Let $n\ge 2k+2$, $k\ge 4$ and let $\cF_1,\cF_2,\dots, \cF_h\subseteq \binom{[n]}{k}$ such that 
\begin{itemize}
    \item 
    $\cF_i\cap \cF_j=\emptyset$ for all $1\le i< j\le h$,
    \item
    $F_i\cap F'_i=\emptyset$ for all pairs of distinct sets $F_i,F'_i\in \cF_i$ for any $i=1,2,\dots,h$,
    \item
    $F_i\cap F_j\neq \emptyset$ for any $1\le i<j\le j$ and any $F_i\in \cF_i$, $F_j\in \cF_j$
\end{itemize}
hold. Then we have $\sum_{i=1}^h|\cF_i|\le \binom{n-1}{k-1}$.
\end{theorem}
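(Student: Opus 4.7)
The plan is to apply Bollob\'as's set-pair inequality, in a slightly refined form, to $\cF := \bigcup_{i=1}^h \cF_i$. To each $F \in \cF$ I would associate the pair $(A_F, B_F) := (F \setminus \{x_F\},\, [n] \setminus F)$ with $x_F \in F$ to be chosen; observe $|A_F| + |B_F| = n-1$, $|A_F| = k-1$, and $A_F \cap B_F = \emptyset$. If the Bollob\'as cross-condition
\[
A_F \cap B_{F'} \neq \emptyset \qquad \text{for every } F \neq F' \text{ in } \cF
\]
is satisfied, the inequality gives $|\cF| \binom{n-1}{k-1}^{-1} \le 1$, that is, the desired bound $|\cF| \le \binom{n-1}{k-1}$.

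The cross-condition is equivalent to $A_F \not\subseteq F'$. When $F, F'$ lie in the same family $\cF_i$, the hypothesis $F \cap F' = \emptyset$ forces $A_F \cap F' = \emptyset$, so the condition is automatic. When $F \in \cF_i$ and $F' \in \cF_j$ with $i \ne j$, failure happens only if $|F \cap F'| = k-1$ and $x_F$ is the unique element of $F \setminus F'$. Hence, defining
\[
Y_F := \{F \setminus F' : F' \in \cF \setminus \cF_i,\ |F \cap F'| = k-1\} \subseteq F,
\]
any choice $x_F \in F \setminus Y_F$ works, which is possible as long as $Y_F \subsetneq F$. If this holds for every $F \in \cF$, the proof is complete.

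The main obstacle is the possibility that $Y_F = F$ for some $F \in \cF_i$: then for each $y \in F$ there exists a set $F_y = (F \setminus \{y\}) \cup \{z_y\} \in \cF \setminus \cF_i$. A quick check using within-family disjointness and $k \ge 4$ forces the $k$ sets $\{F_y\}_{y\in F}$ to lie in pairwise distinct families, since any two of them share at least $k-2 \ge 2$ elements and therefore cannot belong to a common matching. This yields a rigid sunflower-like sub-configuration of $k+1$ sets in $k+1$ distinct families around $F$. The slight generalization of Bollob\'as's inequality is tailored precisely to handle this situation: by modifying the pair $(A_F, B_F)$ at each bad $F$ in a controlled way, while exploiting the cross-intersecting families result to restrict how many sets can coexist with such a configuration, one still obtains a weighted sum $\le 1$ and hence the bound $|\cF| \le \binom{n-1}{k-1}$. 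The delicate part is verifying that the modified set-pair system at the bad sets remains compatible with the Bollob\'as cross-condition with respect to every other $F' \in \cF$; performing this bookkeeping is the crux of the argument.
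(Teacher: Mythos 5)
Your reduction is clean as far as it goes: the pairs $\bigl(F\setminus\{x_F\},\,[n]\setminus F\bigr)$ have weight $1/\binom{n-1}{k-1}$, the cross-condition for the ordered pair $(F,F')$ depends only on $x_F$, sets in the same family never violate it, and a violation across families forces $|F\cap F'|=k-1$ with $x_F$ the unique element of $F\setminus F'$. But the proof stops exactly where the theorem's content lies. When $Y_F=F$ you do not produce an argument; you only announce that some unspecified modification of the pair at each bad $F$, combined with the paper's generalized Bollob\'as lemma, ``still obtains a weighted sum $\le 1$,'' and you yourself flag the verification as ``the crux.'' That crux cannot be waved away: bad sets genuinely occur (take the $k+1$ sets of $\binom{[k+1]}{k}$, each as its own singleton family --- they are pairwise cross-intersecting and every one of them has $Y_F=F$), and the obvious fallback of using the full pair $(F,[n]\setminus F)$ for bad $F$ only contributes the smaller weight $1/\binom{n}{k}$, which does not give $|\cF|\le\binom{n-1}{k-1}$. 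More tellingly, your argument nowhere uses the hypothesis $n\ge 2k+2$, yet the conclusion is false for $n=2k$: partitioning all of $\binom{[2k]}{k}$ into complementary pairs gives $\binom{2k-1}{k-1}$ pairwise cross-intersecting matchings of size $2$ with total size $2\binom{2k-1}{k-1}>\binom{n-1}{k-1}$ --- and in that configuration every set is bad. So the entire difficulty, and the only place the hypothesis on $n$ can enter, is the case you left open.

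A secondary point: you have misread the role of the paper's Lemma~\ref{bollgen}. It is a Bollob\'as-type inequality for pairs \emph{and triples of pairwise disjoint sets drawn from a single family} $\cF_i$ with $|\cF_i|\ge 3$; it is not designed for, and does not apply to, the sunflower-like configurations around a bad $F$. The paper's actual route is structurally different from yours: it bounds the number $h$ of families by Erd\H{o}s--Ko--Rado (when all families are singletons) or Hilton--Milner (when some family has two disjoint sets), bounds the number of non-singleton families by applying Bollob\'as's inequality to the disjoint pairs \emph{inside} the families (each contributing weight $1/\binom{2k}{k}$), and closes the range $2k+2\le n<3k$ with the Gerbner--Lemons--Palmer--Patk\'os--Sz\'ecsi theorem on $(\le 1)$-almost intersecting families. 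Your one-shot weighted Bollob\'as idea is appealing, but as written it is not a proof.
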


Note that the first condition cannot be omitted as otherwise we could repeat some families that consist of a single set.

\bigskip

The remainder of the paper is organized as follows: Section 2 contains the proof of Theorem \ref{main} and in Section 3 we list some open problems along with some remarks.
\section{Proofs}

\begin{proof}[Proof of Theorem\ref{main}] Let $\cF_1,\cF_2,\dots,\cF_h\subseteq \binom{[n]}{k}$ satisfy the conditions of the theorem. As the $\cF_i$'s are families of pairwise disjoint sets, each of them are of size at most $n/k$ and we may assume that $|\cF_1|\le |\cF_2|\le \dots \le |\cF_h|=:t\le n/k$. If $t=1$, then $\cF=\cup_{i=1}^h\cF_i$ form an intersecting family and therefore by the celebrated theorem of Erd\H os, Ko and Rado \cite{ekr} we have $\sum_{i=1}^h|\cF_i|=h\le \binom{n-1}{k-1}$.

Suppose next that $t\ge 2$ holds. Then we claim $h\le \binom{n-1}{k-1}-\binom{n-k-1}{k-1}+1$. Indeed, let us fix one set $F_i$ from each $\cF_i$ for $i=1,2,\dots,h-1$ and two sets $F_h,F'_h\in \cF_h$. Then if 
\begin{itemize}
    \item 
    $|\cap_{i=1}^{h-1}F_i|\ge 2$, then $h-1\le \binom{n-2}{k-2}< \binom{n-1}{k-1}-\binom{n-k-1}{k-1}$,
    \item
    $\cap_{i=1}^{h-1}F_i$ consists of a single element $x$, then either $F_h$ or $F'_h$ cannot contain $x$ and as all $F_i$'s meet both $F_h$ and $F'_h$ we must have $h-1\le \binom{n-1}{k-1}-\binom{n-k-1}{k-1}$,
    \item
    $\cap_{i=1}^{h-1}F_i=\emptyset$, then $\{F_1,F_2,\dots, F_{h-1},F_h\}$ is intersecting with no common elements, so by a result of Hilton and Milner \cite{HM} we obtain $h\le \binom{n-1}{k-1}-\binom{n-k-1}{k-1}+1$.
\end{itemize}

Let $m_i$ denote the number of $j$'s such that $|\cF_j|\ge i$ holds. Then clearly we have  \begin{equation}\label{eq1}
    \sum_{i=1}^h|\cF_i|=h+\sum_{j=2}^tm_j\le h+\left(\frac{n}{k}-1\right)m_2.
\end{equation}

To bound $m_2$ we apply Bollob\'as's famous inequality \cite{b} that states that if $\{(A_1,B_1)\}_{i=1}^l$ are pairs of disjoint sets such that for any $1\le i\neq j\le l$ we have $A_i\cap B_j\neq \emptyset$, then $\sum_{i=1}^l\frac{1}{\binom{|A_i|+|B_i|}{|A_i|}}\le 1$ holds. For any $1\le i \le m_2$ we can pick two sets $F_i,G_i\in \cF_{h-m_2+i}$. Then we can define $2m_2$ pairs $\{(A_j,B_j)\}_{j=1}^{2m_2}$ such that for $1\le j\le m_2$ we have $A_j=F_j, B_j=G_j$ and $A_{2m_2-j}=G_j,B_{2m_2-j}=F_j$. As the $\cF_i$'s are cross-intersecting families of disjoint sets, therefore the pairs $\{(A_j,B_j)\}_{j=1}^{2m_2}$ satisfy the conditions of Bollob\'as's inequality and we obtain $\frac{2m_2}{\binom{2k}{k}}\le 1$ and thus $m_2\le \frac{1}{2}\binom{2k}{k}=\binom{2k-1}{k-1}$. Putting together (\ref{eq1}) and the bounds on $h$ and $m_2$ we obtain
$$
\sum_{i=1}^h|\cF_i|\le \binom{n-1}{k-1}-\binom{n-k-1}{k-1}+1+\frac{n-k}{k}\binom{2k-1}{k-1}.$$
Therefore it is enough to prove $\binom{n-k-1}{k-1}>\frac{n-k}{k}\binom{2k-1}{k-1}$. Observe that $$\frac{\binom{n-k}{k-1}}{\binom{n-k-1}{k-1}}=\frac{n-k}{n-2k+1}\ge \frac{n-k+1}{n-k}=\frac{\frac{n-k+1}{k}\binom{2k-1}{k-1}}{\frac{n-k}{k}\binom{2k-1}{k-1}},$$ therefore if $\binom{n_0-k-1}{k-1}>\frac{n_0-k}{k}\binom{2k-1}{k-1}$ holds for some $n_0$, then $\binom{n-k-1}{k-1}>\frac{n-k}{k}\binom{2k-1}{k-1}$ holds for $n\ge n_0$. Putting $n_0=3k+2$ the above inequality is equivalent to $$k\prod_{i=0}^{k-2}(2k+1-i)>(2k+2)\prod_{i=0}^{k-2}(2k-1-i)$$ which simpifies to $$k(2k+1)2k>(2k+2)(k+2)(k+1).$$
This holds for $k\ge 5$ and a similar calculation shows that if $k=4$, then the desired inequality holds if $n\ge 17=4k+1$.

In all missing cases, except for $k=4$, $n=16$, we have $n<4k$, therefore we have $m_j=0$ for all $j\ge 4$. So for the remaining pairs $n$ and $k$, we need to strengthen our bound on $m_2+m_3$ . We will need the following lemma, a slight generalization of Bollob\'as's result.

\begin{lemma}\label{bollgen}
Let $\{A_i,B_i\}_{i=1}^\alpha$ and $\{A_j,B_j,C_j\}_{j=\alpha+1}^\beta$ be pairs and triples of pairwise disjoint sets such that for any $1\le i<j\le \alpha+\beta$ we have $X_i\cap Y_j\neq \emptyset$ where $X$ and $Y$ can be any of $A,B$ and $C$. Then the following inequality holds:
$$
\sum_{i=1}^{\alpha+\beta}\frac{2}{\binom{|A_i|+|B_i|}{|A_i|}}+\sum_{j=1}^\beta\left(\frac{2}{\binom{|A_{\alpha+j}|+|C_{\alpha+j}|}{|A_{\alpha+j}|}}+\frac{2}{\binom{|B_{\alpha+j}|+|C_{\alpha+j}|}{|B_{\alpha+j}|}}-\frac{2}{\binom{|A_{\alpha+j}|+|B_{\alpha+j}|+|C_{\alpha+j}|}{|A_{\alpha+j}|}}-\frac{2}{\binom{|A_{\alpha+j}|+|B_{\alpha+j}|+|C_{\alpha+j}|}{|B_{\alpha+j}|}}\right)\le 1$$
\end{lemma}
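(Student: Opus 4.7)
The plan is to extend the Katona--Bollob\'as random-permutation proof of the classical set-pair inequality. Take a uniformly random linear order $\pi$ of the ground set $V$ (the union of all the $A_i,B_i,C_j$ involved) and, for disjoint $S,T\subseteq V$, write $[S<_\pi T]$ for the event that every element of $S$ precedes every element of $T$ in $\pi$; then $P([S<_\pi T])=\binom{|S|+|T|}{|S|}^{-1}$, with the analogous formula for three blocks. To each pair $(A_i,B_i)$ I attach the two disjoint events $[A_i<B_i]$ and $[B_i<A_i]$, whose total probability is $\frac{2}{\binom{|A_i|+|B_i|}{|A_i|}}$, the pair's summand on the LHS. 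To each triple $(A_j,B_j,C_j)$ I attach the two events $E_j^+:=[A_j<B_j]\cup[A_j<C_j]\cup[C_j<B_j]$ and $E_j^-:=[B_j<A_j]\cup[C_j<A_j]\cup[B_j<C_j]$.

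A direct inclusion--exclusion computation, using $[A_j<B_j]\cap[A_j<C_j]=[A_j<B_j\cup C_j]$ (probability $\binom{|A_j|+|B_j|+|C_j|}{|A_j|}^{-1}$), $[A_j<B_j]\cap[C_j<B_j]=[A_j\cup C_j<B_j]$ (probability $\binom{|A_j|+|B_j|+|C_j|}{|B_j|}^{-1}$), and the observation that both $[A_j<C_j]\cap[C_j<B_j]$ and the full triple intersection equal $[A_j<C_j<B_j]$ (so that those two inclusion--exclusion corrections cancel), yields
\[ P(E_j^+)=P(E_j^-)=\tfrac{1}{\binom{|A_j|+|B_j|}{|A_j|}}+\tfrac{1}{\binom{|A_j|+|C_j|}{|A_j|}}+\tfrac{1}{\binom{|B_j|+|C_j|}{|B_j|}}-\tfrac{1}{\binom{|A_j|+|B_j|+|C_j|}{|A_j|}}-\tfrac{1}{\binom{|A_j|+|B_j|+|C_j|}{|B_j|}}, \]
so that $P(E_j^+)+P(E_j^-)$ is exactly the $j$-th triple's summand in the lemma.

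The key combinatorial input is the classical Bollob\'as cross-intersecting disjointness: for any two distinct objects $i\neq j$ and any two ``atomic'' pair-events $[X_i<_\pi Y_i]$, $[X_j<_\pi Y_j]$ (with $X_i,Y_i$ any sets from object $i$ and $X_j,Y_j$ any sets from object $j$), the two events are disjoint. Indeed the cross-intersecting hypothesis furnishes $x\in X_i\cap Y_j$ and $y\in X_j\cap Y_i$, and the first event forces $x<_\pi y$ while the second forces $y<_\pi x$; the argument is identical in the pair/pair, pair/triple and triple/triple combinations. The main obstacle is the within-triple overlap $E_j^+\cap E_j^-$, which---in contrast to the pair case---is nonempty (for example, the block ordering $A_j<B_j<C_j$ lies in $E_j^+$ via $[A_j<B_j]$ and in $E_j^-$ via $[B_j<C_j]$). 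The role of the negative binomials $-\tfrac{2}{\binom{|A_j|+|B_j|+|C_j|}{|A_j|}}-\tfrac{2}{\binom{|A_j|+|B_j|+|C_j|}{|B_j|}}$ should be precisely to absorb this double-counting via inclusion--exclusion; once this within-triple accounting is merged with the cross-object disjointness, the entire LHS reorganises as the probability of a disjoint union of events across all objects, which is at most $1$.
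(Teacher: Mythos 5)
Your setup and the computation of $P(E_j^+)$ are correct: the identity $P(E_j^+)+P(E_j^-)=\bigl(\text{the }j\text{-th triple's total summand}\bigr)$ is a clean reformulation, and the cross-object disjointness of the atomic events $[X_i<_\pi Y_i]$ is exactly the Bollob\'as step the paper also uses. The genuine gap is the final step. Since $E_j^+\cap E_j^-\neq\emptyset$ (as you yourself observe), we have
$$P(E_j^+)+P(E_j^-)=P(E_j^+\cup E_j^-)+P(E_j^+\cap E_j^-)>P(E_j^+\cup E_j^-),$$
so the $j$-th summand \emph{strictly exceeds} the probability of the union of all six atomic events attached to the triple $j$ --- the inequality points the wrong way. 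The negative binomial terms cannot ``absorb'' this double counting: they are already fully spent as the inclusion--exclusion corrections needed inside $P(E_j^+)$ and $P(E_j^-)$ separately, and nothing is left over to pay for $P(E_j^+\cap E_j^-)\ge P([A_j<B_j<C_j])>0$. That the left-hand side cannot be reorganised as the probability of any disjoint union is shown decisively by taking $\alpha=0$, $\beta=1$ and $A,B,C$ three disjoint singletons (the cross-intersecting hypothesis is then vacuous): the left-hand side equals $1+1+1-\tfrac{2}{3}-\tfrac{2}{3}=\tfrac{5}{3}>1$. So the inequality is size-sensitive, and no pure disjointness argument can close it; to finish along your route you would have to prove the extra inequality $\sum_j P(E_j^+\cap E_j^-)\le P(\text{no atomic event holds for any }j)$, which is precisely what fails in the singleton example (and which requires the sets to be reasonably large, as they are in the paper's application where all sets have size $k\ge 4$).

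The paper's own accounting is different and sidesteps this trap: it keeps the plain Bollob\'as sum $\sum_j 2P([A_j<_\pi B_j])$ on the left, and instead lowers the right-hand side $m!$ by the number of permutations that satisfy one of the four $C_j$-relations but neither $A_j<_\pi B_j$ nor $B_j<_\pi A_j$; such ``wasted'' permutations are disjoint across different objects by cross-intersection and are never counted for any pair, so subtracting a lower bound for their number is legitimate. In that argument the reciprocals $\binom{|A_j|+|B_j|+|C_j|}{|A_j|}^{-1}$ and $\binom{|A_j|+|B_j|+|C_j|}{|B_j|}^{-1}$ enter as upper bounds for the permutations that satisfy a $C_j$-relation \emph{and} are counted for $j$ (forcing one of $A_j<_\pi B_j\cup C_j$, $B_j\cup C_j<_\pi A_j$, $B_j<_\pi A_j\cup C_j$, $A_j\cup C_j<_\pi B_j$), not as your within-$E_j^+$ inclusion--exclusion corrections. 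You should redo the triple bookkeeping in that subtractive form rather than as a sum of overlapping event probabilities.
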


\begin{proof}
Let us define $M$ to be $\bigcup_{i=1}^\alpha(A_i\cup B_i)\cup \bigcup_{j=1}^\beta(A_{\alpha+j}\cup B_{\alpha+j}\cup C_{\alpha+j})$ and let us write $|M|=m$. Just as before, let us introduce a family $\{S_i,T_i\}_{i=1}^{2(\alpha+\beta)}$ of disjoint pairs as $S_i=A_i, T_i=B_i$ and $S_{2(\alpha+\beta)-j}=B_j, T_{2(\alpha+\beta)-j}=A_j$ for all $1\le i,j\le \alpha+\beta$. We count the pairs $(\pi,j)$ such that $\pi$ is a permutation of the elements of $M$ and $1\le j\le 2(\alpha+\beta)$ with all elements of $S_j$ preceding all elements of $T_j$ in $\pi$ that is $\max\{\pi^{i-1}(s):s\in S_j\}<\min\{\pi^{-1}(t):t\in T_j\}$. We denote this by $S_j<_\pi T_j$. For every fixed $j$ there exist exactly $|S_j|!|T_j|!(m-|S_j|-|T_j|)!\binom{m}{|S_j|+|T_j|}$ permutations $\pi$ with $S_j<_{\pi} T_j$. On the other hand for any fixed $\pi$ there exists at most one $j$ with $S_j<_{\pi} T_j$. Indeed, if $i\neq j,2(\alpha+\beta)-j$, then both $S_i$ and $T_i$ meet both $S_j$ and $T_j$, while clearly if $S_j<_{\pi} T_j$, then $S_{2(\alpha+\beta)-j}=T_j\not<_{\pi} S_j=T_{2(\alpha+\beta)-j}$. These observations would yield Bollob\'as's original inequality, but we haven't used the existence of the $C_j$'s. Observe that if $A_j<_{\pi} C_j$, $C_j<_{\pi} A_j$, $B_j<_{\pi} C_j$ or $C_j<_{\pi} B_j$, then again by the cross-intersecting property $(\pi,i)$ can be a pair counted only if $i=j$ or $i=2(\alpha+\beta)-j$ and at least one of $A_i<_{\pi} B_i \cup C_i$, $B_i\cup C_i<_{\pi} A_i$, $C_i\cup B_i <_{\pi} A_i$, $C_i\cup A_i<_{\pi} B_i$ holds. Counting $j$ and $2(\alpha+\beta)-j$ cases together this yields
$$    \sum_{j=1}^{\alpha+\beta}2|A_j|!|B_j|!(m-|A_j|-|B_j|)!\binom{m}{|A_j|+|B_j|}\le m!$$
$$-\sum_{j=1}^{\alpha+\beta}2\left[|A_j|!|C_j|!(m-|A_j|-|C_j|)!\binom{m}{|A_j|+|C_j|}+|B_j|!|C_j|!(m-|C_j|-|B_j|)!\binom{m}{|C_j|+|B_j|}\right]
$$
$$+\sum_{j=1}^{\beta}2|A_{\alpha+j}|!(|B_{\alpha+j}|+|C_{\alpha+j}|)!(m-|A_{\alpha+j}|-|B_{\alpha+j}|-|C_{\alpha+j}|)!\binom{m}{|A_{\alpha+j}|+|B_{\alpha+j}|+|C_{\alpha+j}|}$$
$$+\sum_{j=1}^{\beta}2|B_{\alpha+j}|!(|A_{\alpha+j}|+|C_{\alpha+j}|)!(m-|A_{\alpha+j}|-|B_{\alpha+j}|-|C_{\alpha+j}|)!\binom{m}{|A_{\alpha+j}|+|B_{\alpha+j}|+|C_{\alpha+j}|}$$
Dividing by $m!$ and rearranging yields the statement of the lemma.
\end{proof}

We apply Lemma \ref{bollgen} to the families $\cF_{h-m_2+1},\dots,\cF_h$ with $\beta=m_3$ and $\alpha=m_2-m_3$. As all sets in the $\cF_i$'s are of size $k$ we obtain
\begin{equation}\label{eq3}
    \frac{2(m_2-m_3)}{\binom{2k}{k}}+\frac{6m_3}{\binom{2k}{k}}-\frac{6m_3}{\binom{3k}{k}}\le 1.
\end{equation}
As $\binom{3k}{k}\ge 3\binom{2k}{k}$ for $k\ge 3$, the left hand side of the above equation is greater than $\frac{2(m_2-m_3)}{\binom{2k}{k}}+\frac{4m_3}{\binom{2k}{k}}=\frac{2(m_2+m_3)}{\binom{2k}{k}}$. Therefore we obtain $m_2+m_3\le \frac{1}{2}\binom{2k}{k}=\binom{2k-1}{k-1}$. So for $n<4k$ we have the bound
\begin{equation}\label{eq2}
\sum_{i=1}^h|\cF_i|\le h+m_2+m_3\le \binom{n-1}{k-1}-\binom{n-k-1}{k-1}+1+\binom{2k-1}{k-1}.    
\end{equation}
Suppose first that $n\ge 3k$ holds. Plugging into (\ref{eq2}) we obtain the upper bound $\binom{n-1}{k-1}+1$. To get rid of the extra 1, we need to use the uniqueness part of the Hilton-Milner theorem \cite{HM} that we used to get our bound on $h$. It states that if $k\ge 4$ and an intersecting family $\cF\subseteq \binom{[n]}{k}$ with $\cap_{F\in \cF}F=\emptyset$ has size $\binom{n-1}{k-1}-\binom{n-k-1}{k-1}+1$, then there exists $x\in [n]$ and $x\notin G\subseteq [n]$ such that $\cF=\{G\}\cup \{F: x\in F, F\cap G\neq \emptyset\}$. Observe that for any $H\neq G$ with $x\notin H$ there exist lots of sets $F\in \cF$ that are disjoint with $H$, so only sets $H'$ that contain $x$ can be added to the $\cF_j$'s. But as all $\cF_j$'s consist of pairwise disjoint sets, such an $H'$ can only be added to the $\cF_j$ containing $G$. Also, at most one such set can be added as again this $\cF_j$ consists of pairwise disjoint sets. We obtained that if $t\ge 2$ and $h=\binom{n-1}{k-1}-\binom{n-k-1}{k-1}+1$, then $\sum_{j=1}^h|\cF_j|\le \binom{n-1}{k-1}-\binom{n-k-1}{k-1}+2<\binom{n-1}{k-1}$. 

Next, we assume that $2k+2\le n<3k$. Then we have $t\le 2$ and therefore the family $\cF':=\cup_{i=1}^h\cF_i$ has the property that for any $F\in \cF'$ there exists at most one other $G\in \cF'$ that is disjoint with $F$. Such families are called \textit{$(\le 1)$-almost intersecting} and Gerbner et al. \cite{Getal} proved that whenever $2k+2\le n$ holds, then any $(\le 1)$-almost intersecting family $\cG\subseteq \binom{[n]}{k}$ has size at most $\binom{n-1}{k-1}$.

Finally, if $n=16$, $k=4$, then we need to bound $h+m_2+m_3+m_4\le h+m_2+2m_3\le h+2m_2+3m_3$. As $\binom{3k}{k}=\binom{12}{4}>6\binom{8}{4}=\binom{2k}{k}$, (\ref{eq3}) implies $2m_2+3m_3\le \binom{8}{4}$. Using the Hilton-Milner bound $h\le \binom{n-1}{k-1}-\binom{n-k-1}{k-1}+1$ and plugging in $n=16$, we obtain $\sum_{i=1}^h|\cF_i|\le h+2m_2+3m_3\le \binom{n-1}{k-1}-\binom{11}{3}+1+\binom{8}{4}<\binom{n-1}{k-1}$.  This concludes the proof.
\end{proof}

\begin{proof}[Proof of Theorem \ref{kneser}]
Theorem \ref{main} shows that $Kn_{n,k}\le \binom{n-1}{k-1}$ holds if $n\ge 2k+2$. Observe that $diam(Kn_{n,k})\le 3$ if and only if $n\ge 2.5k-0.5$. Also, Theorem \ref{klavko} yields that if the diameter of a graph $G$ is at most 3, then any independent set in $G$ is in general position. The largest independent sets in $Kn_{n,k}$ correspond to \textit{stars}, i.e. families $\cS_x=\{H \in \binom{[n]}{k}:x \in H\}$ for some $x\in [n]$. Therefore, $gp(Kn_{n,k})\ge \binom{n-1}{k-1}$ holds provided $n\ge 2.5k-0.5$.

If $2k+2\le n<2.5k-0.5$, then the upper bound  of Theorem \ref{main} is based on the result of Gerbner et al \cite{Getal} on $(\le 1)$-almost intersecting families. Their result also states that the only $(\le 1)$-almost intersecting families of size $\binom{n-1}{k-1}$ are stars. But if $n<2.5k-0.5$, then $\{ H\in \binom{[n]}{k}:1 \in H\}$ is not in general position as shown by the following example:
let $n=2k+M$ with $1\le M< 0.5k-0.5$ and $F_1=[k]$, $F_2=\{1,2,\dots,k-M-1\}\cup \{k+1,k+2,\dots,k+M+1\}$. We claim that $d_{Kn_{n,k}}(F_1,F_2)\ge 4$. Indeed, as $C:=[n]\setminus (F_1\cup F_2)$ is of size $k-1$, we have $d_{Kn_{n,k}}(F_1,F_2)\ge 3$. Suppose $G_1,G_2$ are $k$-subsets of $[n]$ with $F_1\cap G_1=G_1\cap G_2=\emptyset$. Let us define $\ell=|G_1\cap F_2|$. As $G_1$ is disjoint with $F_1$, so with $F_1 \cap F_2$, we have $\ell\le M+1$. Therefore $|C\cap G_1|\ge k-M-1$ must hold. As $G_2$ is disjoint with $G_1$, we obtain $|C\cap G_2|\le M$, but as $|F_1\setminus F_2|=M+1$ and $2M+1<k$, $G_2$ must meet $F_2$, so indeed $d_{Kn_{n,k}}(F_1,F_2)\ge 4$ holds. On the other hand, for any $x\in F_2\setminus F_1$ and $y,z\in F_1\setminus F_2$, the sets $F_1, C\cup \{x\}, F_2\setminus \{x\}\cup \{y\}, C\cup \{z\},F_2$ form a path of length 4, therefore a geodesic with $1\in F_2\setminus \{x\}\cup \{z\}$. This shows that $\{ H\in \binom{[n]}{k}:1 \in H\}$ is not in general position.
Therefore if $2k+2 \le n<2.5k-0.5$ holds, then we have $gp(Kn_{n,k})<\binom{n-1}{k-1}$.

Finally, let us consider the case $n=2k+1$. Again, vertices corresponding to sets of stars are not in general position and all other independent sets have size smaller than $\binom{n-1}{k-1}$. So suppose $F,F'$ are disjoint sets in a family $\cF$ corresponding to vertices in general position. Then by Theorem \ref{klavko}, for any set $G\neq F,F'$ in $\cF$ we must have $d(G,F)=d(G,F')$. Observe that in $Kn_{2k+1,k}$ we have $d(H,H')=\min\{2(k-|H\cap H'|),2|H\cap H'|+1\}$. 

Let us first assume that $k=2l+1$ is odd. Then by the above, for any $G\in \cF$ we must have $|G\cap F|=|G\cap F'|=l$ and the unique element $x \in [2k+1]\setminus (F\cup F')$ must belong to $G$. Therefore, with the notation of the proof of Theorem \ref{main}, we have $m_2=1$ and $h \le \binom{n-1}{k-1}-\binom{n-k-1}{k-1}+1$ and thus $|\cF|\le \binom{n-1}{k-1}-\binom{n-k-1}{k-1}+2<\binom{n-1}{k-1}$.

Let us assume that $k=2l$ is even. Then by the above, for any $G\neq F,F'$ in $\cF$ we must have $|G\cap F|=|G\cap F'|=l$ and thus $G \subseteq F\cup F'$. If we take one set from each disjoint pair, we obtain a family $\cG\subseteq \binom{[2k]}{k}$ such that any pairwise intersection is of the same size. By Fisher's inequality, we obtain that the number $m_2$ of pairs is at most $2k$. Moreover, as all sets of $\cF$ are $k$-subsets of $[2k]$, we must have $h\le \frac{1}{2}\binom{2k}{k}$. Therefore, we need to show $\frac{1}{2}\binom{2k}{k}+2k<\binom{2k}{k-1}=\binom{2k}{k}\frac{k}{k+1}$ which is equivalent to $\frac{2k(2k+2)}{k-1}<\binom{2k}{k}$. This holds for $k\ge 4$.
\end{proof}

\section{Concluding remarks}

First of all, it remains an open problem to determine $gp(Kn_{n,k})$ for $2k+1\le n <2.5k-0.5$.

Let us finish this short note with two remarks. First observe that an $(\le 1)$-almost intersecting family $\cF\subseteq \binom{[n]}{k}$ corresponds to a subset $U$ of the vertices of $Kn_{n,k}$ such that $Kn_{n,k}[U]$ does not contain a path on three vertices. There have been recent developments \cite{AT,Getal2,T} in the general problem of finding the largest possible size of a subset $U$ of the vertices of $Kn_{n,k}$ such that $Kn_{n,k}[U]$ does not contain some fixed forbidden graph $F$. Note that independently of the host graph $G$, if a subset $S$ of the vertices of $G$ is in general position, then $G[S]$ cannot contain a path on three vertices as an \textit{induced} subgraph. Returning to the Kneser graph $Kn_{n,k}$ it would be interesting to address the induced version of the vertex Tur\'an problems mentioned above.

There have been lots of applications and generalizations of Bollob\'as's inequality. Very recently O'Neill and Verstra\"ete \cite{OV} obtained Bollob\'as type results for $k$-tuples. Their condition to generalize disjoint pairs is completely different from the condition of Lemma \ref{bollgen}. More importantly pairwise disjoint, cross-intersecting families were introduced by R\'enyi \cite{R} as \textit{qualitatively independent partitions} if the extra condition that $\cup_{F\in\cF_i}F=[n]$ holds for all $1\le i \le h$ is added, and the uniformity condition $|F|=k$ for all $F\in \cup_{i=1}^h\cF_i$ is replaced by $|\cF_i|=d$ for all $1\le i\le h$. Gargano, K\"orner and Vaccaro proved \cite{GKV} that for any fixed $d\ge 2$ as $n$ tends to infinity the maximum number of qualitatively independent $d$-partitions is $2^{(\frac{2}{d}-o(1))n}$. Based on their construction, for any fixed $d$ one can obtain $2^{(2-o(1))k}$ many pairwise disjoint cross-intersecting $d$-tuples of $k$-sets as $k$ tends to infinity.
\bigskip

\textbf{Acknowledgement}: I would like to thank Sandi Klav\v zar and Gregor Rus for pointing out that sets of the star are not in generalposition if $n<2.5k-0.5$. I would like to thank M\'at\'e Vizer for showing me the relation of the families considered in the paper to qualitative independent partitions and G\'abor Simonyi for providing me a short introduction to this topic.

\textbf{Funding}: Research supported by the National Research, Development and Innovation Office - NKFIH under the grants SNN 129364 and K 116769.

\end{document}